\newtheorem{theorem}{Theorem}
\newtheorem{lemma}[theorem]{Lemma}
\newtheorem{proposition}[theorem]{Proposition}
\newtheorem{corollary}[theorem]{Corollary}
\theoremstyle{definition}
\newtheorem{definition}[theorem]{Definition}
\newtheorem{remark}[theorem]{Remark}
\theoremstyle{remark}
\newtheorem*{ack}{Acknowledgment}
\numberwithin{equation}{section}
\DeclareMathOperator{\Char}{char}
\DeclareMathOperator{\Mod}{\,mod}
\DeclareMathOperator{\GF}{GF}
\DeclareMathOperator{\GL}{GL}
\DeclareMathOperator{\fS}{\mathfrak{S}}
\DeclareMathOperator{\fSone}{\mathfrak{S}_I}
\DeclareMathOperator{\fStwo}{\mathfrak{S}_{II}}
\DeclareMathOperator{\fSthree}{\mathfrak{S}_{III}}
\DeclareMathOperator{\End}{End}
\newcommand{\Sym}{\tilde{S}_n}
\newcommand{\sym}{\tilde{S}_{n-1}}
\newcommand{\pdiv}{p\mid}
\newcommand{\pnotdiv}{p\nmid}
\newcommand{\arr}{\uparrow\uparrow}
\begin{document}
\title[On a Construction of the Basic Spin Representations]{On a Construction of the Basic Spin Representations of Symmetric Groups}

\author[L.~Maas]{Lukas A.~Maas}
\address{Institut f\"ur Experimentelle Mathematik, 
Universit\"at Duisburg--Essen, Ellernstr. 29, 45326 Essen, Germany}
\curraddr{School of Mathematics,
University of Birmingham,
Birmingham B15 2TT, United Kingdom}
\email{lukas.maas@iem.uni-due.de}

\begin{abstract}
We present an inductive method for constructing the basic spin representations 
of the double covers of the symmetric groups over fields of any characteristic.
\end{abstract}
\maketitle
\section*{Introduction}
Basic spin representations are the smallest faithful representations of the double covers
of the symmetric groups. 
The ordinary basic spin representation $X_n$ was given explicitly by 
I.~Schur \cite{Schur1911}, see \cite{HoffmanHumphreys1992}*{Chapter 6} for a modern treatment. 
Let $p$ be a prime. By a result of D.~B.~Wales \cite{Wales1978}, 
the $p$-modular reduction of $X_n$ is irreducible unless $n$ is odd and divisible by $p$,
in which case there exist exactly two distinct composition factors. We call these $p$-modular 
representations also basic spin. In this paper we describe a construction 
of the basic spin representations over splitting fields of any characteristic, that is, 
at most quadratic extensions of the prime field. The method of 
construction is inductive with respect to $n$.\\ 
Let $\Sym$ be the group generated by elements $z, t_1, \ldots, t_{n-1}$ subject to the relations
\begin{align*}
 	z^2 = &\ 1, \\
      t_i^2 = &\ z \quad (1\le i\le n-1), \\
      (t_it_{i+1})^3 = &\ z \quad (1\le i\le n-2), \\
      (t_it_j)^2 = &\ z\quad (1\le i, j\le n-1,\ |i-j| > 1).
\end{align*}
Provided $n\geq 4$, it is well-known that $\Sym$ is a double cover of the symmetric group 
$S_n$, and the projection of $\Sym$ onto $S_n$ may be defined by sending the generators $t_i$ to 
$(i,i+1)\in S_n$.  
There is another double cover $\hat{S}_n$ of $S_n$ 
which is non-isomorphic to $\Sym$ for $n\neq 6$, and 
$\hat{S}_n$ can be described as generated by elements
$z, s_1, \ldots, s_{n-1}$ subject to the relations
$z^2=1=s_i^2=(zs_i)^2\ (1\le i\le n-1)$, 
$(s_is_{i+1})^3=1\ (1\le i\le n-2)$, and
$s_is_j = zs_js_i\ (1\le i, j\le n-1,\ |i-j| > 1)$.  
If we consider representations over a splitting field of characteristic $p=2$, 
then every representation of $\Sym$ or $\hat{S}_n$ contains the central element $z$ in its kernel, 
and essentially, we deal with $2$-modular representations of $S_n$. 
If $p\neq 2$, then there is a bijective 
correspondence between faithful irreducible representations of $\Sym$ and 
faithful irreducible representations of $\hat{S}_n$. Let $\omega\in K$ be a primitive fourth root 
of unity; then this correspondence can be realized through
$R\mapsto \omega R$ where $R$ is a representation of $\Sym$ and $\omega R$ is a representation 
of $\hat{S}_n$ defined by $z\mapsto -id$ and $s_i\mapsto \omega\cdot t_i^R$ for $i=1,\ldots,n-1$.
Hence we may concentrate below on representations of $\Sym$.

\section*{Basic Spin Representations}
We adopt the following degree notation from \cite{KleshchevTiep2004}: for $n\geq 4$ and
$p\geq 0$ let 
\begin{align*}
\delta(\Sym) =& \begin{cases}
	\ 2^{k-1} & \quad\mbox{if}\ n=2k,\cr
	\ 2^{k-1} & \quad\mbox{if}\ n=2k+1\ \mbox{and}\ \pdiv n,\cr
	\ 2^{k} & \quad\mbox{if}\ n=2k+1\ \mbox{and}\ \pnotdiv n.
\end{cases}
\end{align*}
Let $sgn$ be the representation of 
$\Sym$ which is given by inflation of the sign representation of $S_n$. The \textit{associate 
representation} of an $\Sym$-representation $R$ is $sgn\otimes R$. 
\begin{theorem}\label{bsp} For $n\geq 4$, let $X_n=X_n^+$ be the basic (spin) representation 
of $\Sym$ over $\mathbb{C}$ as defined in \cite{Schur1911}*{\S 22} and let 
$X_n^-=sgn\otimes X_n^+$, its associate representation.
\begin{enumerate}
\item\label{sch}	$X_n^\pm$ is an irreducible representation of $\Sym$ of degree $\delta(\Sym)$.
	The representations $X_n^+$ and $X_n^-$ are equivalent only for odd $n$.
\item\label{wal}	For any prime number $p$, the modular reduction $\overline{X_n^\pm}=Y_n^\pm$ 
	over a splitting field of characteristic $p$ 
	is irreducible unless $\pdiv n$ and $n$ is odd. 
	In this case, $\overline{X_n}$ has two distinct composition factors $Y_n^+$ and $Y_n^-$ 
	of degree $\delta(\Sym)$. \\
	Moreover, the associate representations $Y_n^+$ and $Y_n^-$ are non-equivalent 
	exactly if $n$ is even and $\pnotdiv n$, or $n$ is odd and $\pdiv n$. 
\item \label{wal2} Let $n\geq 4$. Assume that $T$ is an absolutely irreducible 
	representation of $\Sym$, and let $t\in\Sym$ be an element of order $3$ 
	projecting to a $3$-cycle in $S_n$.
      If $t^T$ has a quadratic minimal polynomial, then $T$ is equivalent to a 
      basic spin representation of $\Sym$, or $T$ is an ordinary $2$-dimensional 
      representation of $\tilde{S}_4$ which contains $z$ in its kernel. 
\end{enumerate}
\end{theorem}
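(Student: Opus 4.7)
The plan is to regard (\ref{sch}) and (\ref{wal}) as known, citing Schur \cite{Schur1911} and Wales \cite{Wales1978} respectively, and to concentrate on the characterisation (\ref{wal2}). Since $n\geq 4$, all $3$-cycles in $S_n$ are conjugate and their order-$3$ lifts form a single conjugacy class in $\Sym$, so the hypothesis on $t^T$ is independent of the particular choice of $t$.

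For (\ref{wal2}) I would split on whether $z$ lies in the kernel of $T$. In the case $z\in\ker T$, the representation $T$ is inflated from an irreducible of $S_n$. The trivial and sign representations have \emph{linear} minimal polynomial on a $3$-cycle and are therefore excluded by the word \emph{quadratic}. For $n=4$ the $2$-dimensional irreducible of $S_4$ inflated from $S_4/V_4\cong S_3$ is the only remaining candidate, giving the listed exception. For $n\geq 5$ I would rule out all other irreducibles by observing that a $3$-cycle acts on the standard $(n-1)$-dimensional module with all three cube roots of unity, and extending this via the usual tensor, wedge and Murnaghan--Nakayama constructions---or, in positive characteristic, via the modular branching of Specht modules---to show that no non-linear irreducible of $S_n$ has a $3$-cycle acting with fewer than three distinct eigenvalues.

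In the case $z^T=-\mathrm{id}$ I proceed by induction on $n$. For the base $n=4$, over any splitting field of characteristic $\neq 2$ the faithful absolutely irreducibles of $\tilde{S}_4$ are of degrees $2,2,4$; the two of degree $2$ are basic spin and its associate, and the faithful irreducible of degree $4$ fails the quadratic-minimal-polynomial test by a direct character computation. For the inductive step, choose $t=(t_{n-2}t_{n-1})^2\in\sym$: every constituent of $T\!\downarrow\!\sym$ inherits the hypothesis, so by induction each is basic spin for $\sym$ (the non-faithful exception is ruled out by faithfulness of $T$ on the centre). The classical branching rules for spin representations (see Hoffman--Humphreys \cite{HoffmanHumphreys1992}) then identify the unique irreducible of $\Sym$ whose restriction to $\sym$ decomposes into basic-spin constituents with the correct multiplicities, namely $X_n^\pm$, or $Y_n^\pm$ in the modular situation described in (\ref{wal}).

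The main obstacle is the last step in positive characteristic: when $p\mid n$ with $n$ odd, the basic spin itself decomposes into two composition factors $Y_n^\pm$, and the restriction $T\!\downarrow\!\sym$ need not be semisimple. To handle this I would match dimensions against the formula for $\delta(\Sym)$, reconstruct the composition series of $T\!\downarrow\!\sym$ from the basic-spin constituents of $\sym$, and invoke (\ref{wal}) to identify $T$ itself as one of $Y_n^+$ or $Y_n^-$.
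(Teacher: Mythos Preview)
The paper does not attempt an independent proof of (\ref{wal2}): it cites \cite{Wales1978}*{Theorems 7.7 and 8.1} for both (\ref{wal}) and (\ref{wal2}), and only the case $n=4$ of (\ref{wal2}) is handled by direct inspection. Your proposal, by contrast, tries to reprove Wales' characterisation from scratch.

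As written, the argument has two genuine gaps. First, for $z\in\ker T$ and $n\geq 5$, the assertion that every non-linear irreducible $S_n$-module has a $3$-cycle acting with cubic minimal polynomial is only gestured at; the phrase ``tensor, wedge and Murnaghan--Nakayama constructions'' is not a proof, and in characteristic~$3$ (where the minimal polynomial of a $3$-cycle is $(x-1)^k$) one must actually control $k$ across all modular irreducibles, which is not a routine check. Second, the inductive step is incomplete. Knowing that every composition factor of $T{\downarrow}\sym$ is basic spin yields only $\dim T\in\delta(\sym)\,\mathbb{Z}$. The branching rules in \cite{HoffmanHumphreys1992} describe how a given irreducible of $\Sym$ restricts, not which irreducibles of $\Sym$ have a prescribed set of restriction constituents; to conclude that $T$ itself is basic spin you must either invoke a minimal-degree bound for faithful irreducibles of $\Sym$ (of the type in \cite{KleshchevTiep2004}, which is at least as deep as what you are proving), or run through the full upward branching from basic spin of $\sym$ and verify that every non-basic candidate---e.g.\ the spin representation labelled by $(n-1,1)$ in characteristic~$0$---also acquires a non-basic constituent on restriction. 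You do neither, and you explicitly flag the modular step as an unresolved obstacle. In short, what you have is a plausible outline of an alternative proof of Wales' theorem, not a proof; the paper avoids all of this by citing Wales directly.
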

\begin{proof} For \eqref{sch}, see \cite{Schur1911}*{\S 22 and \S 23.(VII)}, or 
\cite{HoffmanHumphreys1992}*{Theorems 6.2 and 6.8}; for \eqref{wal} and \eqref{wal2} see 
\cite{Wales1978}*{Theorems 7.7 and 8.1}, respectively. The case $n=4$ in \eqref{wal2} is
handled by direct inspection.
\end{proof}

For notational convenience, we identify a (matrix) representation $R$ of $\Sym$ of degree $d$ 
over a field $K$ with the sequence $( t_1^R,\ldots, t_{n-1}^R )\in\left (\GL_d(K)\right )^{n-1}$. 
By $I$ we denote the identity matrix and by $0$ we denote the zero matrix, 
both of suitable degree over $K$; moreover, $Z=z^R=\pm I$. For a given matrix $M$ 
over $K$ and any integer $\mu$, we write $\mu M$ instead of $(\mu\cdot 1_K) M$.\\
In the sense of Theorem \ref{bsp} we refer to the absolutely irreducible representations 
$X_n^\pm$ or $Y_n^\pm$ as the \textit{basic spin representations} of $\Sym$ for $n\geq 4$.
We call any composition factor of a given representation of $\Sym$ which is equivalent to a 
basic spin representation simply a \textit{basic spin composition factor}.

\begin{definition} Let $T=(T_1,\ldots,T_{n-1})$ be a representation of $\Sym$ over a field $K$. 
We say that $T$ satisfies $\Delta$ if 
\begin{equation}\tag{$\Delta$}\label{HYP}
(ZT_iT_{i+1})^2 + ZT_iT_{i+1} + I = 0\quad\mbox{for}\ i\in\{1,\ldots, n-2\}.
\end{equation}
\end{definition}
Let $n\geq 4$ and $T$ be an $\Sym$-representation over a field $K$. 
As $zt_it_{i+1}\in\Sym$ has order $3$ and projects to 
$(i+2,i+1,i)$ in $S_n$, all elements $zt_it_{i+1}$ ($i=1,\ldots,n-2$) are conjugate 
in $\Sym$. 
Hence, if \ref{HYP}($T$) holds for some $i$, it already holds for all $i=1,\ldots,n-2$ and 
every representation
equivalent to $T$ satisfies \ref{HYP} as well; in particular, if $T$ is an ordinary representation
and $p$ is a prime, then its $p$-modular reduction satisfies \ref{HYP}. 
Let $t:=zt_it_{i+1}$ and assume that $t^T$ satisfies \ref{HYP}. 
If $\Char K\neq 3$, this means that $1$ is not an eigenvalue of $t^T$. 
Let $\omega$ be a primitive cube root of unity in some extension 
field of $K$; as $t$ is conjugate to $t^2$, we see that both, $\omega$ and $\omega^2$, 
are eigenvalues of $t^T$. In particular, its minimal polynomial is $x^2+x+1\in K[x]$.
If $\Char K=3$, then we shall suppose that $z\notin\ker T$; thus the kernel of $T$ is trivial 
and the minimal polynomial of $t^T$ is quadratic.

\begin{proposition} \label{prop}
Let $n\geq 4$ and $T=(T_1,\ldots,T_{n-1})$ be a representation of $\Sym$ over a 
splitting field $K$ of characteristic $p\geq 0$. 
For $n=4$ or $p=3$ we shall assume that $z\notin\ker T$. 
If $T$ satisfies \ref{HYP}, then all $T$-composition factors are basic spin.
\end{proposition}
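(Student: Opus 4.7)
The plan is to pass to composition factors of $T$ and apply Theorem~\ref{bsp}(\ref{wal2}) to each one. Since the identity $(\Delta)$ is a polynomial relation in the matrices $Z,T_1,\ldots,T_{n-1}$, it restricts to any invariant subspace and descends to any quotient, so every composition factor $U$ of $T$ again satisfies $(\Delta)$. As $K$ is a splitting field, each such $U$ is absolutely irreducible, and the standing convention $Z = z^R = \pm I$ transfers from $T$ to every composition factor, giving $z^U = Z$.

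Next I would verify, for every such $U$ and for $t := zt_1 t_2$, that $t^U$ has quadratic minimal polynomial, which is exactly the hypothesis of Theorem~\ref{bsp}(\ref{wal2}). In characteristic $\ne 3$ this is already the content of the paragraph preceding the proposition: $(\Delta)$ forces the minimal polynomial of $t^U$ to divide $x^2+x+1$, and were it linear then $t^U$ would be a scalar $\omega^a I$, which is incompatible with the conjugacy of $t$ to $t^{-1}$ (or to $zt^{-1}$) inside $\Sym$ combined with $z^U = \pm I$.

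In characteristic $3$ the relation $(\Delta)$ is equivalent to $(t^U - I)^2 = 0$, so I would have to exclude the possibility $t^U = I$. If $t$ lay in $\ker U$ then the normal closure $N$ of $t$ in $\Sym$ would also lie in $\ker U$; since $t$ projects to a $3$-cycle and the normal closure of a $3$-cycle in $S_n$ is $A_n$ (for $n \ge 4$), while $\tilde{A}_n$ is a non-split central extension of $A_n$, this forces $N = \tilde{A}_n$, and in particular $z \in N \subseteq \ker U$. But the hypothesis $z \notin \ker T$, together with the convention $Z = \pm I$, forces $Z = -I$ and hence $z \notin \ker U$, a contradiction.

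With the quadratic minimal polynomial verified, Theorem~\ref{bsp}(\ref{wal2}) shows that each composition factor $U$ is equivalent either to a basic spin representation or (only possible when $n = 4$) to the $2$-dimensional representation of $\tilde{S}_4$ with $z$ in its kernel. The latter possibility is excluded by the same device: for $n = 4$ the hypothesis $z \notin \ker T$ forces $Z = -I$ and hence $z \notin \ker U$ for every composition factor. The main obstacle in this plan is the characteristic-$3$ step, which is the only place where the hypothesis $z \notin \ker T$ really enters; the crucial input there is the normal-closure computation identifying $\tilde{A}_n$.
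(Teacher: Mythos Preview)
Your proof is correct and follows exactly the paper's approach: pass $(\Delta)$ to every composition factor and apply Theorem~\ref{bsp}(\ref{wal2}), using the hypothesis $z\notin\ker T$ to rule out the degenerate possibilities when $p=3$ or $n=4$. The paper's own proof is two sentences and defers the quadratic-minimal-polynomial verification to the paragraph immediately preceding the proposition, whereas you spell out those exclusions (via the normal-closure argument and the $n=4$ case) explicitly.
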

\begin{proof}
If $T$ is reducible, then adapting a basis with respect 
to any bottom composition factor $T_\circ$ of $T$ forces \ref{HYP} to be true for both, 
$T_\circ$ and the corresponding factor representation. By iteration and part \eqref{wal2} of 
Theorem \ref{bsp}, every composition factor of $T$ is basic spin. 
\end{proof}

\section*{Doubled Degree}

\begin{definition}\label{C0} If $T=(T_1,\ldots,T_{n-2})$ is a basic spin representation
of $\sym$ then define the sequence 
$\fS(T)=(T^\uparrow_1,\ldots,T^\uparrow_{n-1})$ by 
\[	T^\uparrow_i = \begin{bmatrix} T_i & \\ & -T_i\end{bmatrix}\
	(1\leq i\leq n-3),\ 
	T^\uparrow_{n-2} = \begin{bmatrix} T_{n-2} & -I \\ & -T_{n-2}\end{bmatrix},
	\quad T^\uparrow_{n-1} = \begin{bmatrix} & I\\ -I& \end{bmatrix}.
\]
\end{definition}

\begin{lemma} Let $T$ be a basic spin representation of $\sym$ over a field $K$ of 
characteristic $p\geq 0$.
Then $\fS(T)$ is a basic spin representation of $\Sym$ over $K$ if either $n$ is odd and 
$\pnotdiv n$, or $n$ is even and $\pdiv (n-1)$. Otherwise, $\fS(T)$ has exactly two 
basic spin composition factors.
\end{lemma}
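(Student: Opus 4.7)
The plan is to verify that $\fS(T)$ is a representation of $\Sym$ satisfying \ref{HYP}; Proposition \ref{prop} then forces every composition factor to be basic spin, and comparing $\dim\fS(T) = 2\delta(\sym)$ with $\delta(\Sym)$ will give the irreducibility dichotomy.

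For the representation property I would check each defining relation of $\Sym$ in turn. The relations involving only $T^\uparrow_1,\dots,T^\uparrow_{n-3}$ are block-diagonal and reduce instantly to the corresponding relations for $T$. Using $T_{n-2}^2 = z^T = \pm I$, one verifies $(T^\uparrow_{n-2})^2 = Z$ and $(T^\uparrow_{n-1})^2 = Z$ directly, as well as the commuting relations $(T^\uparrow_i T^\uparrow_{n-1})^2 = Z$ for $i<n-2$ and $(T^\uparrow_i T^\uparrow_{n-2})^2 = Z$ for $i<n-3$; the latter uses the derived identity $T_iT_{n-2}T_i = T_{n-2}$, which is immediate from $(T_iT_{n-2})^2 = z^T$. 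The only genuinely new identities are the two boundary braid relations $(T^\uparrow_{n-3}T^\uparrow_{n-2})^3 = Z$ and $(T^\uparrow_{n-2}T^\uparrow_{n-1})^3 = Z$, each of which reduces to a short $2\times 2$ block multiplication closing from $(T_{n-3}T_{n-2})^3 = z^T$ and $T_{n-2}^2 = z^T$ respectively.

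Since $T$ is basic spin it satisfies \ref{HYP}. For $i\leq n-3$ the block-diagonal form of $ZT^\uparrow_iT^\uparrow_{i+1}$ carries \ref{HYP} directly from $T$; for $i = n-2$, a $2\times 2$ block evaluation of $(ZT^\uparrow_{n-2}T^\uparrow_{n-1})^2 + ZT^\uparrow_{n-2}T^\uparrow_{n-1} + I$ collapses to zero using $T_{n-2}^2 = z^T$. In odd characteristic $(T^\uparrow_{n-1})^2 = -I$, so $z\notin\ker\fS(T)$ and the extra hypothesis of Proposition \ref{prop} is satisfied; the proposition then yields that every composition factor of $\fS(T)$ has degree $\delta(\Sym)$.

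A case split on the parity of $n$ and the divisibility of $n$ or $n-1$ by $p$, using the formula for $\delta$, gives $2\delta(\sym) = \delta(\Sym)$ exactly when $n$ is odd with $\pnotdiv n$ or $n$ is even with $\pdiv (n-1)$, and $2\delta(\sym) = 2\delta(\Sym)$ otherwise; this produces the claimed dichotomy. The main obstacle, and essentially the only nontrivial computation, is the block-matrix bookkeeping at position $n-2$, from which both the braid relation and \ref{HYP} drop out of the same calculation. The edge case $p=2$ with $n=4$, where $z$ lies in the kernel of every irreducible $\Sym$-representation and Proposition \ref{prop} does not apply as stated, can be settled by direct inspection of $\tilde{S}_3$ and $\tilde{S}_4$ modulo $2$.
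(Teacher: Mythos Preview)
Your proposal is correct and follows exactly the paper's approach: verify that $\fS(T)$ is a representation of $\Sym$ satisfying \ref{HYP}, invoke Proposition~\ref{prop} so that every composition factor is basic spin, and then read off the dichotomy from $2\delta(\sym)$ versus $\delta(\Sym)$ via Theorem~\ref{bsp}. You supply more detail on the block computations and the $p=2$, $n=4$ edge case than the paper does, but the underlying argument is the same.
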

\begin{proof} Straightforward calculations show that $\fS(T)$ 
satisfies the defining relations of $\Sym$. Thus $\fS(T)$ is a 
representation of $\Sym$ of degree $2\delta(\sym)$. As \ref{HYP}($\fS(T)$) holds, the claim 
follows from Proposition \ref{prop} and Theorem \ref{bsp}. 
\end{proof}

\section*{Stationary Degree}
We are left to deal with the cases where $\delta(\sym)=\delta(\Sym)$, that is,
either $n$ is odd and divisible by $p$, or $n$ is even and $n-1$ is not divisible by $p$.
Given a basic spin representation $(T_1,\ldots,T_{n-2})$ of $\sym$, 
the basic approach is to find an element $T_{n-1}$ such that 
the defining relations of $\Sym$ and the condition \ref{HYP} for $i=n-2$ are satisfied by 
$(T_1,\ldots,T_{n-2},T_{n-1})$. This yields the following equations
\begin{align}
	T_{n-1}^2 & = -I. \label{cond1}\\
  	T_{n-1}T_i + T_iT_{n-1} & = 0\quad\mbox{for all}\ i=1,\ldots, n-3;  \label{cond2}  \\ 
  	T_{n-1}T_{n-2}+T_{n-2}T_{n-1} & = I.\label{cond3}
\end{align}
Note that $(T_{n-2}T_{n-1})^3=Z$ and $(T_{n-2}T_{n-1})^2+ZT_{n-2}T_{n-1}+I=0$ imply \eqref{cond3}. 
Conversely, 
suppose that $T_{n-1}T_{n-2}+T_{n-2}T_{n-1} = I$ and $T_{n-1}^2=Z$. 
We deduce $(T_{n-2}T_{n-1})^2+ZT_{n-2}T_{n-1}+I=0$ and hence $(T_{n-2}T_{n-1})^3=Z$.
Thus, if the equations 
\eqref{cond1}--\eqref{cond3} hold, then by Proposition \ref{prop}, 
$(T_1,\ldots,T_{n-2},T_{n-1})$ is a basic spin representation of $\Sym$.\\

Recall that for an integer $\mu$ and a matrix $M$ defined over the field $K$ we read 
$\mu M$ as $(\mu\cdot 1_K)M$. 

\begin{lemma}\label{complemma}
Let $T=(T_1,\ldots,T_{n-2})$ be a representation of $\sym$ satisfying \ref{HYP}, 
and consider $J=\sum_{k=1}^{n-2}kT_k$. Then the following equations hold:
\begin{align*} 
J^2 & =\left (-(n-2)^2+\sum_{k=1}^{n-3}k\right )I;\tag{1}\label{comp1}\\
JT_i+T_iJ & = 0\quad\mbox{for all}\ i=1,\ldots, n-3;\tag{2}\label{comp2}\\
JT_{n-2}+T_{n-2}J & = (1-n)I.\tag{3}\label{comp3}
\end{align*}
\end{lemma}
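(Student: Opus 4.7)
The plan is to reduce all three identities to two symmetric pairings for $T_iT_j + T_jT_i$, and then to evaluate each expression by expanding it as a double sum and collecting contributions.

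First I would extract the two pairings from the defining relations of $\sym$ and \ref{HYP}. For consecutive indices, \ref{HYP} asserts that $U := ZT_iT_{i+1}$ satisfies $U^2 + U + I = 0$, so $U + U^{-1} = -I$; using $T_i^{-1} = ZT_i$ and centrality of $Z$, one identifies $U + U^{-1}$ with $Z(T_iT_{i+1} + T_{i+1}T_i)$ and obtains
\[
	T_iT_{i+1} + T_{i+1}T_i = -Z.
\]
For $|i-j| > 1$, the relation $(T_iT_j)^2 = Z$ together with $T_i^2 = T_j^2 = Z$ yields $T_iT_j = ZT_jT_i$, and therefore $T_iT_j + T_jT_i = (1+Z)T_iT_j$. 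In the basic-spin setting of this section the factor $1 + Z$ vanishes: either $Z = -I$ (when $\Char K \neq 2$) or $\Char K = 2$ (in which case $1 = -1$).

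With these pairings in hand each of the three equations is a direct count. For (1), write $J^2 = Z\sum_{k=1}^{n-2}k^2 + \sum_{j<k}jk(T_jT_k + T_kT_j)$; the non-consecutive summands vanish and the consecutive ones contribute $-Z\sum_{j=1}^{n-3}j(j+1)$. The telescoping $\sum_{k=1}^{n-2}k^2 - \sum_{j=1}^{n-3}j(j+1) = (n-2)^2 - \sum_{j=1}^{n-3}j$ together with $Z = -I$ (sign changes being trivial in $\Char K = 2$) yields the stated scalar. For (2), expand $JT_i + T_iJ = \sum_k k(T_kT_i + T_iT_k)$ with $1 \le i \le n-3$: the $k = i$ term gives $2iZ$, the consecutive neighbours $k \in \{i-1, i+1\} \cap \{1,\ldots,n-2\}$ contribute $-2iZ$ altogether, and non-consecutive $k$ vanish, so everything cancels; a brief check at $i = 1$ and $i = n-3$ confirms the count. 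For (3), with $i = n-2$ the only consecutive neighbour in range is $k = n-3$, so no cancellation occurs: $JT_{n-2} + T_{n-2}J = 2(n-2)Z - (n-3)Z = (n-1)Z = (1-n)I$.

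No step is conceptually difficult; the only subtlety is organisational, namely confirming that the $-2iZ$ tally in (2) persists at the boundary indices $i = 1$ and $i = n-3$, and noting that the annihilation of the non-consecutive pairings rests on $1 + Z = 0$, which is exactly what singles out the basic-spin regime among representations of $\sym$ satisfying \ref{HYP}.
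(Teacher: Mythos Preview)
Your proof is correct and follows essentially the same route as the paper's: both derive the consecutive-index identity $T_iT_{i+1}+T_{i+1}T_i=I$ (your $-Z$ with $Z=-I$) from \ref{HYP} and the non-consecutive anticommutation from the relation $(T_iT_j)^2=Z$, then expand $J^2$ and $JT_i+T_iJ$ as double sums and collect terms. Your explicit remark that the vanishing of $T_iT_j+T_jT_i$ for $|i-j|>1$ rests on $I+Z=0$ is a point the paper leaves tacit.
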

\begin{proof}
Using the relation $(T_iT_{i+1})^3=Z$ and \ref{HYP} we get 
$T_{i+1}T_i+T_iT_{i+1}=I$ for all $i=1,\ldots,n-3$;
moreover, we have $T_kT_i+T_iT_k=0$ for all $i,k$ with $|i-k|>1$.
Now we compute
 	 \begin{equation*}
 	 \begin{split}
 	 J^2 & = \sum_{k=1}^{n-2}k^2T_k^2+\sum_{k=1}^{n-3}k(k+1)(T_kT_{k+1}+T_{k+1}T_k)\\
  		& = \left (-\sum_{k=1}^{n-2}k^2+\sum_{k=1}^{n-3}k(k+1)\right )I
	 	= \left (-(n-2)^2+\sum_{k=1}^{n-3}k\right )I, 
 	 \end{split} 
 	 \end{equation*}
and similarly, for $i=1,\ldots,n-3$,
\[JT_i+T_iJ = 2i T_i^2+(i-1)(T_{i-1}T_i+T_iT_{i-1})+
 	 (i+1)(T_{i+1}T_i+T_iT_{i+1})=0,\]
and $JT_{n-2}+T_{n-2}J= -2(n-2)I+(n-3)I = (1-n)I$.
\end{proof}

\subsection*{Case I} If $n$ is divisible by $p$, then we may extend our given 
representation of $\sym$ to a representation of $\Sym$ as follows:
\begin{lemma}\label{C1} Let $T=(T_1,\ldots,T_{n-2})$ be a 
basic spin representation of $\sym$ over a field $K$ of characteristic $p$.
If $\pdiv n$, then the equations \eqref{cond1}, \eqref{cond2} and \eqref{cond3} 
are satisfied by 
\[ T_{n-1} = 
\left\{ \begin{array}{ll}
	J &\mbox{if } p> 2\\
		J & \mbox{if } p=2,\ n\equiv 0\Mod 4\\
		J+I & \mbox{if } p=2,\ n\equiv 2\Mod 4\\
\end{array}\right\}\mbox{ where }\ J=\sum_{k=1}^{n-2} k T_k.
		\]
In particular, $(T_1,\ldots,T_{n-2},T_{n-1})$ is a basic spin representation of $\Sym$ over $K$.
\end{lemma}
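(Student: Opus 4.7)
The plan is to verify the three conditions \eqref{cond1}, \eqref{cond2}, \eqref{cond3} directly by substituting the proposed $T_{n-1}$ into the identities of Lemma \ref{complemma}, and then to appeal to the remark immediately preceding that lemma: once these equations hold, all defining relations of $\Sym$ are satisfied by $(T_1,\ldots,T_{n-2},T_{n-1})$, and Proposition \ref{prop} forces every composition factor of the resulting representation to be basic spin. Since its degree $\delta(\sym)$ already equals $\delta(\Sym)$ in the present situation, the representation must in fact be irreducible.

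Relation \eqref{cond2} is immediate from \eqref{comp2} of Lemma \ref{complemma} for both candidates $T_{n-1} = J$ and $T_{n-1} = J+I$, since the extra summand $I$ commutes with every $T_i$. For \eqref{cond3}, part \eqref{comp3} gives $JT_{n-2}+T_{n-2}J = (1-n)I$; the hypothesis $p \mid n$ reduces $1-n$ to $1$ in $K$, so $J$ satisfies the equation. In characteristic $2$ the extra term $2T_{n-2}$ arising when $J$ is replaced by $J+I$ vanishes, so \eqref{cond3} persists for that choice as well.

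The real work is concentrated in \eqref{cond1}. Simplifying the right-hand side of \eqref{comp1} yields $J^2 = -\tfrac{(n-1)(n-2)}{2}\,I$, and I would then reduce this scalar modulo $p$ in three cases. If $p$ is odd and $p \mid n$, then $(n-1)(n-2)/2 \equiv (-1)(-2)/2 = 1$ in $K$, so $J^2 = -I$. If $p = 2$ and $n \equiv 0 \pmod 4$, then $n-1$ is odd and $(n-2)/2$ is odd, giving $J^2 = I = -I$. Finally, if $p = 2$ and $n \equiv 2 \pmod 4$, then $(n-2)/2$ is even and $J^2 = 0$; here the shift $(J+I)^2 = J^2 + 2J + I = I$ recovers the required value $-I$.

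The main (mild) obstacle is precisely the characteristic-$2$ bookkeeping of the last case: the natural candidate $J$ degenerates so that $J^2 = 0$, and the correction $J \mapsto J+I$ is needed to repair \eqref{cond1} while simultaneously preserving \eqref{cond2} and \eqref{cond3}. Once the three equations have been verified in all three subcases, the ``in particular'' clause of the lemma is automatic from the discussion preceding Lemma \ref{complemma}.
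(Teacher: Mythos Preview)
Your argument is correct and follows the same route as the paper's proof: both verify \eqref{cond1}--\eqref{cond3} by direct appeal to Lemma~\ref{complemma} and then invoke Proposition~\ref{prop}, with the characteristic-$2$ case split according to $n\bmod 4$ and repaired by the shift $J\mapsto J+I$. The only cosmetic difference is that you first simplify $-(n-2)^2+\sum_{k=1}^{n-3}k$ to $-\tfrac{(n-1)(n-2)}{2}$ as an integer before reducing modulo~$p$, whereas the paper works with the unsimplified sum; the arithmetic checks are equivalent.
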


\begin{proof} By part \eqref{comp1} of Lemma \ref{complemma},
	$J^2=\left (-(n-2)^2+\sum_{k=1}^{n-3}k\right )I$.
 	 If $n$ is odd, or $n$ is even and $p\neq 2$, then from $\pdiv n$ 
 	 we deduce $p\geq 3$ and verify \eqref{cond1} easily. If $p=2$ and $n\equiv 0\Mod 4$, then
 	 $\sum_{k=1}^{n-3}k=1$; if $p=2$ and $n\equiv 2\Mod 4$, then 
 	 $T_{n-1}^2=J^2+I$ and $\sum_{k=1}^{n-3}k=0$. Hence \eqref{cond1} holds also for $p=2$.
 	 The conditions \eqref{cond2} and \eqref{cond3} follow directly from
 	 \eqref{comp2} and \eqref{comp3} of Lemma \ref{complemma}, respectively.
\end{proof} 	
 
\begin{definition} We denote the construction described in Lemma \ref{C1} by 
$\fSone$. 
\end{definition}

\begin{remark} 
 Let  $T=(T_1,\ldots,T_{n-2})$ be a basic spin representation 
 of $\sym$ for some odd $n\geq 7$ over a field $K$ of characteristic $p$ such that $\pdiv n$.
 Consider the $K\Sym$-module $V$ corresponding to $\fS(T)$, and let 
 \[ M=\begin{bmatrix} M_1&M_2\\M_3&M_4 \end{bmatrix}\in\End_{K\Sym}(V)\]
 where $M_1$, $M_2$, 
 $M_3$ and $M_4$ are block matrices of equal size over $K$. From $MT^\uparrow_i=T^\uparrow_iM$ 
 for all $i=1,\ldots,n-1$ we instantly get $M_1=M_4$ and $M_2=-M_3$, as well as
 \begin{align}
	 M_1T_i & = T_iM_1 \quad\mbox{for all}\ i=1,\ldots, n-3;\label{cond4}\\
  	 M_1T_{n-2} & = T_{n-2}M_1+M_2.\label{cond5}
\end{align}
Since $\delta(\sym)=\delta(\tilde{S}_{n-2})$, we see that $(T_1,\ldots,T_{n-3})$ 
is already a basic spin representation of $\tilde{S}_{n-2}$.
Hence, by \eqref{cond4} and Schur's Lemma, $M_1$ is scalar; 
thus \eqref{cond5} yields $M_2=0$. This shows 
$\End_{K\Sym}(V) \cong K$, so $V$ is indecomposable. 
\end{remark}

\subsection*{Case II}
Now suppose that $n=2(k+1)$ and $\pnotdiv n(n-1)(n-2)$, so 
$\delta(\Sym)=\delta(\sym)$. As our approach is inductive, we may assume that we are given 
a basic spin representation $T$ of $\sym$ which itself has been constructed from a 
basic spin representation $S$ of $\tilde{S}_{n-2}$, that is, 
$T=\fS(S)$. 

\begin{lemma}\label{C2}  Provided $n\geq 6$ is even, let $T=(T_1,\ldots,T_{n-3})$ be a 
basic spin representation of $\tilde{S}_{n-2}$ over a field $K$ of characteristic 
$p\geq 0$ such that $\pnotdiv n(n-1)(n-2)$. Then 
\[ T_{n-1} = \begin{bmatrix} 
		-\alpha J & (\beta-1)I\\
		\beta I & \alpha J\end{bmatrix}\]
	where
\[ J = \sum_{k=1}^{n-3} kT_k,\ 
		\alpha =  (n-1)^{-1}\left ( 1\pm\sqrt{-n(n-2)^{-1}}\right ) \mbox{ and }
		\beta =  (n-2)\alpha\]
extends $\fS(T)$ to a basic spin representation of $\Sym$ over $K(\alpha)$.
\end{lemma}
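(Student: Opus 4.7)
The plan is to verify that the tuple $(T_1^\uparrow,\ldots,T_{n-2}^\uparrow, T_{n-1})$ satisfies the three matrix equations \eqref{cond1}, \eqref{cond2}, and \eqref{cond3}. By the discussion preceding the lemma, these equations imply both the remaining defining relations of $\Sym$ involving $T_{n-1}$ and the condition \ref{HYP}, so Proposition \ref{prop} then forces every composition factor of the extension to be basic spin. Since $\pnotdiv n(n-1)(n-2)$ excludes $p=3$ (three consecutive integers always contain a multiple of $3$) and $n\ge 6$, the hypothesis of Proposition \ref{prop} is met. A straightforward degree count under the arithmetic hypotheses gives $2\delta(\tilde{S}_{n-2}) = \delta(\Sym)$, so the only possibility is that the extension is itself an irreducible basic spin representation.

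The relation \eqref{cond1} is what drives the definition of $\alpha$. Because the off-diagonal entries of $T_{n-1}$ are scalar multiples of $I$, a direct block multiplication collapses the off-diagonal blocks and produces $T_{n-1}^2 = (\alpha^2 J^2 + \beta(\beta-1))\,I$. Applying Lemma \ref{complemma}.\eqref{comp1} to the $\tilde{S}_{n-2}$-representation $T$ (so that the lemma's ``$n$'' becomes $n-1$) yields $J^2 = -\tfrac{(n-3)(n-2)}{2}\,I$. Substituting this and $\beta = (n-2)\alpha$, the requirement $T_{n-1}^2 = -I$ reduces to the scalar quadratic
\[
(n-1)(n-2)\alpha^2 - 2(n-2)\alpha + 2 = 0,
\]
whose discriminant works out to $-4n(n-2)$; the quadratic formula then recovers exactly the two stated values $\alpha = (n-1)^{-1}(1\pm\sqrt{-n/(n-2)})$.

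Equation \eqref{cond2} splits into two sub-cases. For $i \leq n-4$, the block-diagonal form of $T_i^\uparrow$ reduces the anticommutator $T_{n-1}T_i^\uparrow + T_i^\uparrow T_{n-1}$ to a scalar multiple of $JT_i+T_iJ$, which vanishes by Lemma \ref{complemma}.\eqref{comp2}. For $i=n-3$, the off-diagonal $-I$ entry of $T_{n-3}^\uparrow$ creates extra cross terms in the block product, but because the corresponding entries of $T_{n-1}$ are themselves scalar, those cross terms pair up and cancel; the diagonal contribution collapses to $-\alpha(JT_{n-3}+T_{n-3}J)-\beta I$, which is zero by Lemma \ref{complemma}.\eqref{comp3} (giving $JT_{n-3}+T_{n-3}J=(2-n)I$) together with the identity $\beta=(n-2)\alpha$.

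Equation \eqref{cond3} is the cleanest step: since $T_{n-2}^\uparrow$ is the antidiagonal block matrix $\bigl[\begin{smallmatrix}0 & I\\-I & 0\end{smallmatrix}\bigr]$, left- and right-multiplication by it simply permutes the blocks of $T_{n-1}$ with sign changes, and the sum telescopes to $(1-\beta)I + \beta I = I$ on the diagonal and to $0$ off the diagonal, requiring no further relation on $\alpha$ or $\beta$. The main obstacle in the argument is really just keeping the block bookkeeping correct in the $i = n-3$ case of \eqref{cond2} and confirming the precise quadratic that forces the value of $\alpha$; everything else reduces to mechanical manipulation using the two parts of Lemma \ref{complemma} and the identity $\beta = (n-2)\alpha$.
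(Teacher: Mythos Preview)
Your argument is correct and matches the paper's proof: both verify the three matrix equations \eqref{cond1}--\eqref{cond3} by reducing them via block multiplication to the identities of Lemma~\ref{complemma}, with \eqref{comp1} producing $J^2=-\tfrac12(n-3)(n-2)I$ and hence the quadratic for~$\alpha$, while \eqref{comp2} and \eqref{comp3} dispatch \eqref{cond2} and \eqref{cond3}. The only additions you make beyond the paper's terse write-up are the explicit observation that $p\neq 3$ (so Proposition~\ref{prop} applies unconditionally) and the degree check $2\delta(\tilde S_{n-2})=\delta(\Sym)$ ensuring irreducibility, both of which the paper leaves implicit in the surrounding discussion.
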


\begin{proof} 
We have to verify the relations \eqref{cond1}, \eqref{cond2} and \eqref{cond3}.
The calculations depend on Lemma \ref{complemma}. From \ref{complemma}.\eqref{comp1} we get 
$J^2 = -1/2(n-3)(n-2)I$ and hence $T_{n-1}^2=-I$ as
$\alpha^2J^2+(\beta^2-\beta+1)I = 0$. 
From \ref{complemma}.\eqref{comp2} and \ref{complemma}.\eqref{comp3} we have
$JT_i+T_iJ=0$ for all $i=1,\ldots, n-4$
and $JT_{n-3}+T_{n-3}J=-(n-2)I$, respectively, thus we deduce
$T_{n-1}T_i^\uparrow+T_i^\uparrow T_{n-1}=0$ for $i=1,\ldots,n-3$ and
$T_{n-1}T_{n-2}^\uparrow+T_{n-2}^\uparrow T_{n-1}=I$.
\end{proof}

\subsection*{Case III} Let $n=2k+2$ such that $\pdiv (n-2)$ and $\pnotdiv n$, and suppose that 
we are given a basic spin representation $T=(T_1,\ldots,T_{n-3})$ 
of $\tilde{S}_{n-2}$ over a field $K$ of characteristic $p>2$.
In this situation the approach of 
Lemma \ref{C2} fails since the matrix $J$ squares to zero. 
But as in Case II where $T_{n-1}$ is subdivided into four blocks of equal size, we again 
divide the upper left corner into blocks. 
Since $\delta(\tilde{S}_{n-2})=\delta(\tilde{S}_{n-3})=2^{k-1}$ and $\pnotdiv (n-3)$,
we may suppose that $T$ has been constructed by means of $\fSone$, thus
$T_{n-3}=\sum_{k=1}^{n-4}kT_k$. Furthermore, we may also assume that the 
representation $(T_1,\ldots,T_{n-4})$ of $\tilde{S}_{n-3}$ has been 
constructed by $\fS$ from a basic spin representation of $\tilde{S}_{n-4}$.

\begin{lemma}\label{C3} 
Let $n\geq 8$ be even. If $T=(T_1,\ldots,T_{n-5})$ is a basic spin representation of 
$\tilde{S}_{n-4}$ over a field $K$ of characteristic $p\geq 3$ such that
$\pdiv (n-2)$, define
\[ T_{n-1} = \begin{bmatrix} 
		J & -I\\
		& -J\end{bmatrix}\quad\mbox{where}\quad 
J=\pm\sqrt{-1}\begin{bmatrix} \sum_{k=1}^{n-5}kT_k & 2I\\
			   2I & -\sum_{k=1}^{n-5}kT_k\end{bmatrix}.\]
The matrix $T_{n-1}$ extends the basic spin representation $\fS\fSone\fS(T)$ 
of $\sym$ to $\Sym$. The field of definition is $K(\sqrt{-1})$.
\end{lemma}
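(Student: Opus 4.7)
The plan is to verify equations \eqref{cond1}--\eqref{cond3} for the proposed $T_{n-1}$ relative to $R := \fS\fSone\fS(T)$ and then invoke Proposition \ref{prop}. First I would check that $R$ is indeed a basic spin representation of $\sym$: writing $T' := \fS(T)$, $T'' := \fSone(T')$, and $R := \fS(T'')$, each outer $\fS$ lands in the ``odd index, index not divisible by $p$'' branch of the earlier $\fS$ lemma (since $n \equiv 2 \Mod p$ forces $p \nmid n-3$ and $p \nmid n-1$), while $\fSone$ applies because $p \mid n-2$.

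Next I would spell out the relevant generators of $R$. Setting $A := \sum_{k=1}^{n-5}kT_k$, direct substitution in Definition \ref{C0} and Lemma \ref{C1} yields
\[ T''_{n-3} = \sum_{k=1}^{n-4}kT'_k = \begin{bmatrix} A & I \\ -(n-4)I & -A \end{bmatrix}, \]
while $R_i = \mathrm{diag}(T''_i,-T''_i)$ for $i\leq n-4$, $R_{n-3} = \begin{bmatrix} T''_{n-3} & -I \\ 0 & -T''_{n-3} \end{bmatrix}$, and $R_{n-2} = \begin{bmatrix} 0 & I \\ -I & 0 \end{bmatrix}$. Observe also that $J$ itself has the $2\times 2$ block form $\pm\sqrt{-1}\begin{bmatrix} A & 2I \\ 2I & -A \end{bmatrix}$ with blocks of the size of $T_k$.

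The relation $T_{n-1}^2 = -I$ reduces blockwise to $J^2 = -I$; expanding via the inner block gives $J^2 = -(A^2+4I)$, and Lemma \ref{complemma}.\eqref{comp1} applied to $T$ yields $A^2 = (-(n-5)^2 + (n-6)(n-5)/2)I$, which modulo $p$ collapses to $-3\cdot I$ under $n\equiv 2\Mod p$, so $J^2 = -I$. The relation $T_{n-1}R_{n-2} + R_{n-2}T_{n-1} = I$ follows by direct block multiplication from the antidiagonal shape of $R_{n-2}$, regardless of $J$. The anticommutations $T_{n-1}R_i + R_iT_{n-1} = 0$ for $i\leq n-3$ reduce blockwise to $JT''_i + T''_iJ = 0$; for $i \leq n-4$ the inner block computation invokes parts \eqref{comp2} and \eqref{comp3} of Lemma \ref{complemma} applied to $T$, with the identity $AT_{n-5}+T_{n-5}A = (4-n)I \equiv 2I$ fitting precisely the symmetric $2I$ off-diagonal inside $J$.

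The main obstacle is the case $i = n-3$. Here $JT''_{n-3} + T''_{n-3}J$ is a $2\times 2$ block whose off-diagonal entries cancel automatically, and whose diagonal entries simplify to $\pm\sqrt{-1}\,\bigl(2A^2 + 2(5-n)I\bigr)$; this vanishes modulo $p$ precisely because $A^2 \equiv -3$ and $5 - n \equiv 3$ both follow from $n \equiv 2\Mod p$. Thus the hypothesis $p \mid n-2$ is indispensable, and the coefficient $2I$ inside $J$ is tailored to balance this computation simultaneously with the requirement $J^2 = -I$. After these verifications, Proposition \ref{prop} concludes that $(R_1,\ldots,R_{n-2},T_{n-1})$ is a basic spin representation of $\Sym$; the field of definition is $K(\sqrt{-1})$.
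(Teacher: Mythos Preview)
Your proposal is correct and follows essentially the same route as the paper: verify \eqref{cond1}--\eqref{cond3} for the proposed $T_{n-1}$ using Lemma~\ref{complemma} at the level of $A=J_\circ=\sum_{k=1}^{n-5}kT_k$, then invoke Proposition~\ref{prop}. The only notable difference is your treatment of the case $i=n-3$: you compute $JT''_{n-3}+T''_{n-3}J$ directly from the explicit block form of $T''_{n-3}$, whereas the paper observes that $T''_{n-3}=\sum_{k=1}^{n-4}kT'_k$ is a linear combination of the $T'_k$ already shown to anticommute with $J$, so the result follows without further calculation.
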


\begin{proof} Let $J_\circ = \sum_{k=1}^{n-5}kT_k$; 
by \ref{complemma}.\eqref{comp1}, 
$ J_\circ^2=( -(n-5)^2+\sum_{k=1}^{n-6}k )I=-3I$ as $n-2$ is divisible by $p$,
and thus $T_{n-1}^2=-I$, so \eqref{cond1} holds. For the sake of clarity, 
we summarize the representations at hand:
\begin{itemize} 
\item $\tilde{S}_{n-3}$:\quad $\fS(T)=(T_1^\uparrow,\ldots,T_{n-4}^\uparrow)$,
\item $\tilde{S}_{n-2}$:\quad $\fSone\fS(T)=(T_1^\uparrow,\ldots,T_{n-4}^\uparrow,T_{n-3})$ with 
$T_{n-3}=\sum_{k=1}^{n-4}kT_k^\uparrow$,
\item $\tilde{S}_{n-1}$:\quad 
$\fS\fSone\fS(T)=(T_1^{\arr},\ldots,T_{n-4}^{\arr},T_{n-3}^\uparrow, T_{n-2}^\uparrow)$.
\end{itemize}
For \eqref{cond2}, we need to show that  
\[T_{n-1}T_i^{\arr}+T_i^{\arr}T_{n-1}=0\quad\mbox{for } i=1,\ldots,n-4\quad\mbox{and}\quad
T_{n-1}T_{n-3}^{\uparrow}+T_{n-3}^{\uparrow}T_{n-1}=0.\]
The left hand sides are
\[\begin{bmatrix} J & -I \\ & -J\end{bmatrix}\begin{bmatrix}T_i^\uparrow& \\ & -T_i^\uparrow\end{bmatrix}
+\begin{bmatrix}T_i^\uparrow& \\ & -T_i^\uparrow\end{bmatrix}\begin{bmatrix} J & -I \\ & -J\end{bmatrix}
=\begin{bmatrix} JT_i^\uparrow+T_i^\uparrow J &  \\ & JT_i^\uparrow+T_i^\uparrow J\end{bmatrix}\]
and
\[\begin{bmatrix} J & -I \\ & -J\end{bmatrix}\begin{bmatrix}T_{n-3}&-I \\ & -T_{n-3}\end{bmatrix}
+\begin{bmatrix}T_{n-3}& -I\\ & -T_{n-3}\end{bmatrix}\begin{bmatrix} J & -I \\ & -J\end{bmatrix}
=\begin{bmatrix} X &\\ & X\end{bmatrix}\] with $X=JT_{n-3}+T_{n-3} J$.
By \eqref{comp2} and \eqref{comp3} of Lemma \ref{complemma}, $J_\circ T_i+T_iJ_\circ=0$ for 
$i=1,\ldots,n-6$, and
$J_\circ T_{n-5}+T_{n-5}J_\circ =(4-n)=2I$, respectively. Thus
\begin{align*}
&JT_i^\uparrow+T_i^\uparrow J\\
& =\begin{bmatrix} J_\circ & 2I \\ 2I& -J_\circ\end{bmatrix}\begin{bmatrix}T_i& \\ & -T_i\end{bmatrix}
+\begin{bmatrix}T_i& \\ & -T_i\end{bmatrix}\begin{bmatrix} J_\circ & 2I \\ 2I& -J_\circ\end{bmatrix}
=0\ (1\leq i\leq n-6),\\
&JT_{n-5}^\uparrow+T_{n-5}^\uparrow J\\
& =\begin{bmatrix} J_\circ & 2I \\ 2I& -J_\circ\end{bmatrix}\begin{bmatrix}T_{n-5}& -I\\ & -T_{n-5}\end{bmatrix}
+\begin{bmatrix}T_{n-5}& -I\\ & -T_{n-5}\end{bmatrix}\begin{bmatrix} J_\circ & 2I \\ 2I& -J_\circ\end{bmatrix}
=0,\\
&JT_{n-4}^\uparrow+T_{n-4}^\uparrow J=\begin{bmatrix} J_\circ & 2I \\ 2I& -J_\circ\end{bmatrix}\begin{bmatrix}& I\\-I & \end{bmatrix}
+\begin{bmatrix}& I\\ -I& \end{bmatrix}\begin{bmatrix} J_\circ & 2I \\ 2I& -J_\circ\end{bmatrix}
=0,\quad{and}\\
&J T_{n-3}+T_{n-3} J = \sum_{k=1}^{n-4} k\left (T_k^\uparrow J+J T_k^\uparrow \right )=0.
\end{align*}
It remains to verify \eqref{cond3} which is checked easily.
\end{proof}

\begin{remark} Let $n\geq 6$ be even and assume that $R$ 
is a basic spin representation of 
$\tilde{S}_{n-2}$. In Lemma \ref{C2} we have given two different 
matrices $T_{n-1}^+$ and $T_{n-1}^-$ which extend $T=\fS(R)=(T_1,\ldots,T_{n-2})$ 
to basic spin 
representations $T^+$ and $T^-$ of $\Sym$, respectively. 
Then $T^+$ and $T^-$ are non-equivalent as any transformation matrix has to commute with 
$T_i$ for $i=1,\ldots,n-2$, and hence is scalar
by Schur's Lemma. Moreover, if $V$ denotes the module afforded by $\fS(T)$, then 
$\begin{bmatrix} I& -T_{n-1}^+\end{bmatrix}$ and 
$\begin{bmatrix} I& -T_{n-1}^-\end{bmatrix}$ are bases of the submodules of $V$
corresponding to $T^+$ and $T^-$, respectively; in particular, $V$ is decomposable. 
A similar remark applies to the construction of Lemma \ref{C3}.
\end{remark}

Finally, it remains to give a basic spin representation for $\tilde{S}_4$.
But we can simply start with
a suitable representation $T$ of $\tilde{S}_2$ and then apply the construction. 
We refer to the constructions of Lemma \ref{C2} and Lemma \ref{C3} as 
$\fStwo$ and $\fSthree$, respectively.
For example, 
\begin{itemize} 
\item over $\GF(2)$ we get
	$\fS(T)=\left (\begin{bmatrix} 1&1\\&1\end{bmatrix},
		     	 \begin{bmatrix} &1\\1&\end{bmatrix}\right )$ for $\tilde{S}_3$, so
    	\[\fSone\fS(T)=\left (\begin{bmatrix} 1&1\\ &1\end{bmatrix},
		     	 \begin{bmatrix} &1\\1&\end{bmatrix},
		     	 \begin{bmatrix} 1&1\\&1\end{bmatrix}\right )\]
 	is a basic spin representation of $\tilde{S}_4$.
\item over $\GF(9)$ with primitive element $\zeta$, the construction $\fSone$ yields
	$\left (\begin{bmatrix} \zeta^2\end{bmatrix},
	\begin{bmatrix} \zeta^2\end{bmatrix}\right )$ as a representation of $\tilde{S}_3$;
	hence
    	\[\fS\fSone(T)=\left (\begin{bmatrix} \zeta^2 & \\ & \zeta^6 \end{bmatrix},
		     	 \begin{bmatrix} \zeta^2 & \zeta^4 \\ & \zeta^6\end{bmatrix},
		     	 \begin{bmatrix} & 1\\ \zeta^4 & \end{bmatrix}\right )\]
 	is a basic spin representation of $\tilde{S}_4$.
\item over $\GF(p^2)$ for $p>3$ with primitive element $\zeta$ and  
$\omega=\zeta^{(p^2-1)/4}$, 
    	\[\fStwo(T)=\left (\begin{bmatrix} \omega&-1\\&-\omega \end{bmatrix},
		     	 \begin{bmatrix} & 1 \\ -1 & \end{bmatrix},
		     	 \begin{bmatrix} -\alpha\omega & 2\alpha-1 \\ 
		     	 2\alpha & \alpha\omega\end{bmatrix} \right )\]
		     	 with $\alpha=3^{-1}(1+\omega\sqrt{2})\in\GF(p^2)$,
 	a basic spin representation of $\tilde{S}_4$. 
\end{itemize}

\begin{corollary} Let $n\geq 4$. 
For $K=\mathbb{C}$, $\GF(2)$, or $\GF(p^2)$ with $p\geq 3$, iterative use of
the constructions $\fS$, $\fSone$, $\fStwo$, or $\fSthree$,  
as described in Table \ref{table}, provides a basic spin representation of 
$\Sym$ over $K$.
\end{corollary}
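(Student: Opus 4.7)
The plan is to prove the corollary by induction on $n\geq 4$. The base case $n=4$ is supplied by the three examples displayed just before the corollary: over $\GF(2)$ one has $\fSone\fS$ applied to the one-dimensional representation $([1])$ of $\tilde{S}_2$; over $\GF(9)$ one has $\fS\fSone$ applied to $([\zeta^2])$; and over $\GF(p^2)$ with $p\geq 5$ one has $\fStwo$ applied to a faithful one-dimensional representation $([\omega])$ of $\tilde{S}_2$. For $K=\mathbb{C}$ the recipe of the third example applies verbatim, since characteristic $0$ divides none of $n$, $n-1$, $n-2$.

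For the inductive step at $n\geq 5$, I would choose the construction by a case distinction on the residues of $n$, $n-1$, $n-2$ modulo $p$. Because these are three consecutive integers, for $p\geq 3$ at most one of them is divisible by $p$; and when $p=2$ the subcase $p\mid (n-2)$ coincides with $p\mid n$. Accordingly:
\begin{itemize}
\item If $p\mid n$, apply $\fSone$ (Lemma \ref{C1}) to the basic spin representation of $\sym$ supplied by induction; the degree is preserved.
\item If $p\nmid n$ and either $n$ is odd, or $n$ is even with $p\mid (n-1)$, apply $\fS$, which doubles the degree from $\delta(\sym)$ to $\delta(\Sym)$.
\item If $n$ is even and $p\nmid n(n-1)(n-2)$, apply $\fStwo$ (Lemma \ref{C2}) to a basic spin representation of $\tilde{S}_{n-2}$ from the inductive hypothesis.
\item If $n$ is even, $p\geq 3$ and $p\mid (n-2)$, apply $\fSthree$ (Lemma \ref{C3}) to a basic spin representation of $\tilde{S}_{n-4}$ from the inductive hypothesis.
\end{itemize}
These cases are exhaustive, and in each the output degree matches $\delta(\Sym)$ as prescribed in the Basic Spin Representations section; this closes the induction.

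What is left is purely technical. For the field of definition: $\fS$ and $\fSone$ introduce no new scalars, while $\fStwo$ needs $\sqrt{-n(n-2)^{-1}}$ and $\fSthree$ needs $\sqrt{-1}$, both of which lie in $\mathbb{C}$ and in any $\GF(p^2)$ (which contains a square root of every element of $\GF(p)$). The main point of care, which I expect to be the only real obstacle, is the structural hypothesis hidden in Lemma \ref{C3}: its proof assumes that the input representation of $\tilde{S}_{n-2}$ has the specific shape $\fSone\fS(R)$ for a basic spin representation $R$ of $\tilde{S}_{n-4}$. This is automatic in our setup, since $\fSthree$ is by definition the composition $\fS\fSone\fS$ followed by adjoining $T_{n-1}$, and when $p\mid(n-2)$ the case split above forces the intermediate ascents $\tilde{S}_{n-4}\to\tilde{S}_{n-3}\to\tilde{S}_{n-2}\to\tilde{S}_{n-1}$ to be realized by precisely $\fS$, $\fSone$, $\fS$. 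Beyond this compatibility check, all the nontrivial work has already been carried out in Lemmas \ref{C1}--\ref{C3}, and the proof reduces to bookkeeping.
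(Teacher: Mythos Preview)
Your proposal is correct and is exactly the argument the paper leaves implicit: the corollary is stated without proof, and the induction you outline is the intended reading of Table~\ref{table} combined with Lemmas~\ref{C1}--\ref{C3}. Your checks that the structural hypotheses of $\fStwo$ and $\fSthree$ are met by the preceding inductive steps (namely that the ascent to $\tilde{S}_{n-1}$ is via $\fS$, respectively $\fS\fSone\fS$, in the relevant cases) are precisely what is needed and nothing more.
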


\begin{table}[h]\caption{Basic Spin Constructions over $K$}\label{table}
\begin{center}
\begin{tabular}{llll}
 \toprule
 \addlinespace[6pt]
 $K$	& 	& $n$ odd 	& $n$ even\\
 \addlinespace[4pt]
 \midrule
 \addlinespace[6pt]
$\mathbb{C}$ &	& $\fS$	& $\fStwo$ \\ 
\addlinespace[3pt]
\midrule
\addlinespace[6pt]
$\GF(2)$ &	& $\fS$	& $\fSone$ \\ 
\addlinespace[3pt]
\midrule
\addlinespace[6pt]
 $\GF(p^2)$		&	$\pnotdiv n(n-1)(n-2)$ 	& $\fS$ 	&  $\fStwo$\\ 
 \addlinespace[6pt]
 $(p\geq 3$) 	&	$\pdiv n $ 			& $\fSone$	& $\fSone$ \\ 
 \addlinespace[6pt]
 	 		&	$\pdiv (n-1)$ 		& $\fS$	& $\fS$ \\ 
 \addlinespace[6pt]
 	  		&	$\pdiv (n-2)$ 		& $\fS$	& $\fSthree$ \\ 
 \addlinespace[6pt]
 \bottomrule
\end{tabular}
\end{center}
\end{table}

\begin{ack} The author is grateful to Wolfgang Lempken, Klaus Lux and J\"urgen M\"uller 
for their suggestions and useful hints.
\end{ack}

% Bibliography
\bibliographystyle{plain}
\begin{bibdiv} 
\begin{biblist} 

\bib{HoffmanHumphreys1992}{book}{
	author = {Hoffman, P.~N.},
	author = {Humphreys, J.~F.},
	date = {1992},
	title = {Projective representations of the symmetric groups},
	address = {New York},
	publisher = {The Clarendon Press Oxford University Press}
}

\bib{KleshchevTiep2004}{article}{
	author = {Kleshchev, A.~S.},
	author = {Tiep, P.~H.},
	date = {2004},
	title = {On restrictions of modular spin representations of symmetric and alternating groups},
	journal = {Trans. Amer. Math. Soc.},
	volume = {356},
	number = {5},
	pages = {1971--1999}
}

\bib{Schur1911}{article}{
	author = {Schur, I.},
	date = {1911},
	title = {{\"U}ber die Darstellung der symmetrischen und der alternierenden Gruppe 
	durch gebrochene lineare Substitutionen},
	journal = {J. Reine Angew. Math.},
	pages = {155--250},
	volume = {139}
}

\bib{Wales1978}{article}{
	author = {Wales, D.~B.},
	date = {1979},
	title = {Some projective representations of {$S\sb{n}$}},
	journal = {J. Algebra},
	volume = {61},
	number = {1},
	pages = {37--57}
}

\end{biblist} 
\end{bibdiv} 

\end{document}